\newcommand{\lam}{\lambda}
\renewcommand{\phi}{\varphi}
\newcommand{\zet}{\zeta}
\DeclareMathOperator{\med}{mid}
\newcommand{\est}{\varnothing}
\newcommand{\longc}{,\ldots,}
\newcommand{\New}{{\mathcal N}}
\newcommand{\Old}{{\mathcal O}}
\newcommand{\cT}{{\mathfrak T}}
\newcommand{\cR}{{\mathfrak R}}
\newtheorem{lemma}{Lemma}
\newtheorem{claim}{Claim}
\newtheorem{theorem}{Theorem}
\newtheorem{corollary}{Corollary}
\newtheorem{proposition}{Proposition}
\newcommand{\refl}[1]{~\ref{l:#1}}
\newcommand{\refm}[1]{~\ref{m:#1}}
\newcommand{\reft}[1]{~\ref{t:#1}}
\newcommand{\refc}[1]{~\ref{c:#1}}
\newcommand{\refp}[1]{~\ref{p:#1}}
\newcommand{\refs}[1]{~\ref{s:#1}}
\newcommand{\refb}[1]{~\cite{b:#1}}
\newcommand{\refe}[1]{~\eqref{e:#1}}
\theoremstyle{remark}
\newtheorem*{example}{Example}
\author{Vsevolod F. Lev}
\email{seva@math.haifa.ac.il}
\address{Department of Mathematics, The University of Haifa at Oranim,
  Tivon 36006, Israel}
\author{Rom Pinchasi}
\email{room@math.technion.ac.il}
\address{Department of Mathematics, Technion - Israel Institute of
  Technology, Haifa 32000, Israel}
\title[$a\pm b=2c$]{Solving $a\pm b=2c$ \\ in the elements of finite sets}
\begin{document}
\baselineskip=16pt

\begin{abstract}
We show that if $A$ and $B$ are finite sets of real numbers, then the number
of triples $(a,b,c)\in A\times B\times (A\cup B)$ with $a+b=2c$ is at most
$(0.15+o(1))(|A|+|B|)^2$ as $|A|+|B|\to\infty$. As a corollary, if $A$ is
antisymmetric (that is, $A\cap(-A)=\est$), then there are at most
$(0.3+o(1))|A|^2$ triples $(a,b,c)$ with $a,b,c\in A$ and $a-b=2c$. In the
general case where $A$ is not necessarily antisymmetric, we show that the
number of triples $(a,b,c)$ with $a,b,c\in A$ and $a-b=2c$ is at most
$(0.5+o(1))|A|^2$. These estimates are sharp.
\end{abstract}

\maketitle

\section{Introduction and summary of results}

For a finite real set $A$ of given size, the number of three-term arithmetic
progressions in $A$ is maximized when $A$ itself is an arithmetic
progression. This follows by observing that for any integer $1\le k\le |A|$,
the number of three-term progressions in $A$ with the middle term at the
$k$th largest element of $A$ is at most $\min\{k-1,|A|-k\}$. A simple
computation leads to the conclusion that the number of triples $(a,b,c)\in
A\times A\times A$ with $a+b=2c$ is at most $0.5|A|^2+0.5$.

Suppose now that only those progressions with the least element below, and
the greatest element above the median of $A$, are counted; what is the
largest possible number of such ``scattered'' progressions? This problem was
raised in \refb{nppz} in connection with a combinatorial geometry question by
Erd\H os. Below we give it a complete solution; indeed, we solve a more
general problem, replacing the sets of all elements below / above the median
with arbitrary finite sets.
\begin{theorem}\label{t:main}
If $A$ and $B$ are finite sets of real numbers, then the number of triples
$(a,b,c)$ with $a\in A,\ b\in B,\ c\in A\cup B$, and $a+b=2c$, is at most
$0.15(|A|+|B|)^2+0.5(|A|+|B|)$.
\end{theorem}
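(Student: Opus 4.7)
The plan is to estimate $|T| = \sum_{c \in A\cup B} f(c)$, where
\[
f(c) := \#\{(a,b) \in A\times B : a+b=2c\}.
\]
Splitting the pairs counted by $f(c)$ according to whether $a<c$, $a=c$, or $a>c$ yields the elementary pointwise inequality
\[
f(c) \le \min\bigl(|A_{<c}|,\,|B_{>c}|\bigr) + \min\bigl(|A_{>c}|,\,|B_{<c}|\bigr) + [c\in A\cap B],
\]
since in the first case the pair is determined by choosing $a\in A_{<c}$ with $2c-a\in B_{>c}$, the third case is symmetric, and $a=c=b$ requires $c\in A\cap B$.

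Next, I would list $A\cup B$ in increasing order as $v_1<\dotsb<v_U$ and introduce the running counts $\alpha_i := |A\cap\{v_1,\ldots,v_{i-1}\}|$ and $\beta_i := |B\cap\{v_1,\ldots,v_{i-1}\}|$. Summing the pointwise estimate over $c=v_i$ bounds $|T|$ by a sum of $U$ terms of the form
\[
\min\bigl(\alpha_i,\,|B|-\beta_i-\varepsilon_i\bigr)+\min\bigl(|A|-\alpha_i-\eta_i,\,\beta_i\bigr)+\delta_i,
\]
with $\varepsilon_i,\eta_i,\delta_i\in\{0,1\}$ encoding whether $v_i$ lies in $A\setminus B$, $B\setminus A$, or $A\cap B$. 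Thus the problem reduces to a discrete optimisation along the monotone lattice path traced by $(\alpha_i,\beta_i)$ as $i$ increases.

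Extracting the sharp factor $0.15=3/20$ is the heart of the argument, and the step where I expect the main difficulty. The naive bound $\min(x,y)\le (x+y)/2$ only delivers a constant of $\tfrac14$, attained when $|A|=|B|$, so some work is needed. My plan is to partition the indices $i$ into regimes according to which of the four quantities $\alpha_i,\,|A|-\alpha_i,\,\beta_i,\,|B|-\beta_i$ is the smallest, and in each regime replace each $\min$ by whichever of its two arguments is actually smaller. The resulting piecewise-linear expression in $\alpha_i,\beta_i$ can then be handled by Abel summation along the lattice path; optimising over the regime boundaries together with the ratio $|A|:|B|$ reduces the global estimate to a compact optimisation whose extremal value should be exactly $3/20$. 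The additive $0.5(|A|+|B|)$ term then absorbs the $[c\in A\cap B]$ indicators and the $\{0,1\}$ label/parity corrections picked up along the way.
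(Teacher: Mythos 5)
Your pointwise inequality $f(c)\le\min(|A_{<c}|,|B_{>c}|)+\min(|A_{>c}|,|B_{<c}|)+[c\in A\cap B]$ is correct, but the reduction it leads to cannot possibly yield the constant $0.15$: the lattice-path optimisation you end up with has maximum about $0.25(|A|+|B|)^2$, not $0.15(|A|+|B|)^2$. To see this, take $A=\{1,\dots,n\}$ and $B=\{N+1,\dots,N+n\}$ with $N$ huge. Then for $c=k\in A$ your bound gives $\min(k-1,n)+\min(n-k,0)=k-1$, and for $c=N+j\in B$ it gives $\min(n,n-j)+0=n-j$, so the right-hand side sums to $2\binom n2\approx n^2=0.25(|A|+|B|)^2$, exceeding $0.15(|A|+|B|)^2+0.5(|A|+|B|)$ for all large $n$ (while the true count here is $0$, since $2c=a+b$ forces $c\approx N/2\notin A\cup B$). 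Since your optimisation is taken over all monotone lattice paths and this configuration is one of them, no amount of regime-splitting or Abel summation downstream can rescue the constant: the information discarded at the very first step is already too much.

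The underlying reason is that the extremal configurations for this problem are arithmetic, not order-theoretic. The paper's sharp example is built from congruence classes modulo $2$ and $4$, and its proof correspondingly splits $A$ and $B$ into residue classes $A_i,A_{ij},B_i,B_{ij}$ (using that $a+b=2c$ forces $a\equiv b\pmod 2$, and then a finer analysis mod $4$), bounds each piece via a Pollard/Lev-type lemma controlling the number of solutions of $a+b=c$ in terms of $|A|,|B|,|C|$ (the function $G$ of Corollary\refc{recasting}), applies induction on $|A|+|B|$ to the sub-configuration of even elements, and finally solves a delicate four-variable optimisation (Lemma\refl{inequality}) whose extremal value is exactly $0.15$. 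An argument that only sees the interleaving of $A$ and $B$ on the real line is blind to all of this structure, so you would need to incorporate some form of the parity decomposition and a quantitative sumset lemma such as Lemma\refl{leqfis} before any optimisation can close at $3/20$.
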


For a subset $A$ of an abelian group, write $-A:=\{-a\colon a\in A\}$. We say
that $A$ is \emph{antisymmetric} if $A\cap(-A)=\est$. Thus, for instance, any
set of positive real numbers is antisymmetric.

For an antisymmetric set $A$, the number of triples $(a,b,c)$ with $a\in A,\
b\in-A,\ c\in A\cup(-A)$, and $a+b=2c$, is twice the number of triples
$(a,b,c)$ with $a,b,c\in A$ and $a-b=2c$. Hence, Theorem \reft{main} yields
\begin{corollary}\label{c:antisymmetric}
If $A$ is a finite antisymmetric set of real numbers, then the number of
triples $(a,b,c)$ with $a,b,c\in A$ and $a-b=2c$ is at most
$0.3|A|^2+0.5|A|$.
\end{corollary}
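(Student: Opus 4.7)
The plan is to deduce the corollary from Theorem \reft{main} applied with $B := -A$. Since $A$ is antisymmetric, $A$ and $-A$ are disjoint, so $|A|+|B| = 2|A|$; Theorem \reft{main} therefore bounds the number of triples $(\alpha,\beta,\gamma) \in A \times (-A) \times (A \cup (-A))$ with $\alpha+\beta = 2\gamma$ by $0.15\cdot (2|A|)^2 + 0.5\cdot 2|A| = 0.6|A|^2 + |A|$.

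The main step is to justify the two-to-one correspondence mentioned in the paragraph preceding the corollary. Writing $N$ for the number of triples $(a,b,c) \in A^3$ with $a-b=2c$, I would associate to each such triple the pair
\[
  (a,-b,c) \in A \times (-A) \times A, \qquad (b,-a,-c) \in A \times (-A) \times (-A),
\]
both of which satisfy the relation $\alpha+\beta = 2\gamma$ counted by Theorem \reft{main}. These two triples are distinct, since equality would force $a=b$ and $c=-c$, hence $c=0$, but $0 \in A$ would contradict antisymmetry. Conversely, any triple $(\alpha,\beta,\gamma)$ counted by Theorem \reft{main} has $\gamma$ lying in exactly one of $A$ or $-A$ (again by disjointness), and this choice together with the signs of the first two coordinates uniquely determines which $A^3$-triple it came from. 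Hence the map is a $2$-to-$1$ surjection.

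Combining the two observations yields $2N \le 0.6|A|^2 + |A|$, i.e.\ the desired estimate $N \le 0.3|A|^2 + 0.5|A|$. The argument is essentially mechanical; the only delicate point is that antisymmetry forbids $0 \in A$, which is precisely what ensures the degenerate case $a=b$, $c=0$ does not collapse the correspondence into something less than $2$-to-$1$.
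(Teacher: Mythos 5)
Your proposal is correct and is exactly the paper's argument: apply Theorem\reft{main} with $B=-A$ and use the two-to-one correspondence $(a,b,c)\mapsto\{(a,-b,c),(b,-a,-c)\}$, which the paper asserts without detail and you verify carefully (antisymmetry excluding $0\in A$ and keeping $A$, $-A$ disjoint is indeed the only delicate point). The only trivial quibble is that $|{-A}|=|A|$ holds regardless of disjointness, so $|A|+|B|=2|A|$ does not actually need antisymmetry.
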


The following example shows that the coefficient $0.3$ of Corollary
\refc{antisymmetric}, and therefore also the coefficient $0.15$ of Theorem
\reft{main}, is best possible.

\begin{example}\label{x:main}
Fix an integer $m\ge 1$, and let $A$ consist of all positive integers up to
$m$, and all even integers between $m$ and $4m$ (taking all odd integers will
do as well). Assuming for definiteness that $m$ is even, we thus can write
  $$ A = [1,m] \cup \{m+2,m+4\longc 4m \}. $$
Notice, that $A$ contains $m/2$ odd elements and $2m$ even elements, of which
exactly $m$ are divisible by $4$; in particular, $|A|=5m/2$. For every triple
$(a,b,c)\in A\times A\times A$ with $a-b=2c$, we have $a\equiv b\pmod 2$ and
$a>b$. There are $\binom {m/2}2$ such triples with $a$ and $b$ both odd, and
$2\binom m2$ triples with $a$ and $b$ both even and satisfying $a\equiv
b\pmod 4$. Furthermore, it is not difficult to see that there are
$\frac34m^2$ triples with $a$ and $b$ both even and satisfying $a\not\equiv
b\pmod 4$. Thus, the total number of triples under consideration is
  $$ \binom{m/2}2+2\binom m2+\frac34m^2=\frac{15}8\,m^2-\frac54\,m
                                         = \frac3{10}\,|A|^2-\frac12\,|A|, $$
the first summand matching the main term of Corollary \refc{antisymmetric}.
\end{example}

Our second principal result addresses the same equation as Corollary
\refc{antisymmetric}, but in the general situation where the antisymmetry
assumption got dropped.

\begin{theorem}\label{t:principal}
If $A$ is a finite set of real numbers, then the number of triples $(a,b,c)$
with $a,b,c\in A$ and $a-b=2c$ is at most $0.5|A|^2+0.5|A|$.
\end{theorem}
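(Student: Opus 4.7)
My plan is to derive Theorem~\reft{principal} from Theorem~\reft{main} by converting the difference equation $a-b=2c$ into a sum equation via the substitution $b\mapsto -b$, and then controlling the overlap $A\cap(-A)$ with a sign-flipping symmetry argument.

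First, I would rewrite $T_A := \#\{(a,b,c)\in A^3 : a-b=2c\}$ as $\#\{(a,b',c)\in A\times(-A)\times A : a+b'=2c\}$ via $b':=-b$. Setting $B:=-A$, an application of Theorem~\reft{main} to the pair $(A,B)$ bounds the enlarged count
\[
 M := \#\{(a,b',c)\in A\times B\times (A\cup B) : a+b'=2c\}
 \le 0.15(2|A|)^2 + 0.5(2|A|) = 0.6|A|^2 + |A|.
\]
Next, the sign-flipping involution $(a,b',c)\mapsto(-b',-a,-c)$ preserves the defining equation and the membership conditions, and it bijects $M$-triples with $c\in A\setminus(-A)$ onto those with $c\in (-A)\setminus A$, while acting on the triples with $c\in S:=A\cap(-A)$. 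Writing $T_A^S$ for the subcount of $T_A$ with $c\in S$, and noting that the $M$-triples with $c\in A$ are precisely the $T_A$-triples, one obtains the identity $M = 2T_A - T_A^S$, hence $T_A = \tfrac{1}{2}(M+T_A^S) \le 0.3|A|^2 + 0.5|A| + \tfrac{1}{2}T_A^S$.

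The trivial bound $T_A^S \le s|A|$ (with $s:=|S|$, using that for each fixed $c$ there are at most $|A|$ pairs $(a,b)\in A^2$ solving $a-b=2c$) then yields $T_A\le 0.5|A|^2+0.5|A|$ whenever $s\le 0.4|A|$. This handles the ``mostly asymmetric'' part of the theorem.

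\textbf{Main obstacle.} The ``almost-symmetric'' regime $s>0.4|A|$ is the delicate case, since the trivial bound on $T_A^S$ becomes too weak. To handle it, I would refine $T_A^S$ by decomposing it according to whether $b$ lies in $S$ or in $P:=A\setminus S$. The part with $b\in S$ reduces, via $b\mapsto-b\in S$, to a 3-term progression count inside $A$ and is therefore bounded by the trivial estimate $\tfrac{1}{2}|A|^2+\tfrac{1}{2}$; the complementary part with $b\in P$ admits a secondary application of Theorem~\reft{main} to the pair $(A,-P)$, giving a contribution of size $0.15(|A|+p)^2+O(|A|)$ with $p:=|P|=|A|-s$. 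Together with an analogous estimate obtained by swapping the roles of $a$ and $b$, and -- if needed -- an invocation of Corollary~\refc{antisymmetric} applied to the antisymmetric set $P$ to control residual all-$P$ contributions, these pieces should combine to yield $T_A\le 0.5|A|^2+0.5|A|$ uniformly in $s$. The main technical difficulty will be the interpolation across the intermediate range $0.4|A|<s<|A|$, where no single one of these estimates is individually sharp; closing this gap will likely require a careful optimization over the proportion $s/|A|$ and attention to the lower-order $O(|A|)$ terms.
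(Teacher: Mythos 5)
Your first step is correct and nicely done: the involution $(a,b',c)\mapsto(-b',-a,-c)$ does give the identity $M=2T_A-T_A^S$, and together with Theorem\reft{main} this settles the theorem whenever $|A\cap(-A)|\le 0.4|A|$. But the remaining regime is not a technicality to be interpolated away -- it is where the whole content of the theorem lives, and your plan for it cannot work. The quantitative target your framework imposes, $T_A^S\le 0.4|A|^2$, is simply false: for $A=[-n,n]$ one has $S=A$, so $T_A^S=T_A=0.5|A|^2+0.5$. Worse, for symmetric $A$ the identity degenerates to $M=T_A$, so Theorem\reft{main} returns only $T_A\le 0.6|A|^2+|A|$ and the factor-of-two gain from the involution evaporates exactly at the extremal configuration. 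The repairs you list do not close this. The $b\in S$ piece is a count of the form $T(A,S,A)$ with $a+b'=2c$; the trivial $3$-AP bound gives $0.5|A|^2+0.5$ for it, which already exhausts the entire budget, and adding $0.15(|A|+p)^2+O(|A|)$ for the $b\in P$ piece overshoots $0.5|A|^2$ whenever $P\ne\est$. Even replacing these by the sharper $G$-bounds of Corollary\refc{recasting} (e.g. $T(A,S,A)\le s|A|-s^2/4$ and $T(A,-P,A)\le p|A|-p^2/4$) yields, with $\sigma=s/|A|$, a total of order $\bigl(\sigma-\sigma^2/4\bigr)+\bigl((1-\sigma)-(1-\sigma)^2/4\bigr)=3/4+O((\sigma-1/2)^2)$ times $|A|^2$, which exceeds $0.5|A|^2$ throughout the intermediate range; around $s\approx|A|/2$ neither your involution bound nor any combination of the listed estimates gets below $0.5|A|^2$.

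The missing ingredient is arithmetic, not combinatorial bookkeeping: $a-b=2c$ forces $a\equiv b\pmod 2$, and the solutions with $c$ odd force $a\not\equiv b\pmod 4$, and it is precisely these congruence restrictions (visible in the extremal sets $[-n,n]$ and in Example\refx{main}) that bring $3/4$ down to $1/2$. Accordingly, the paper proves Theorem\reft{principal} independently of Theorem\reft{main}: after reducing to integers by Lemma\refl{int-red}, it inducts on $|A|$, splits $A$ into residue classes modulo $2$ and modulo $4$, decomposes $T(A,-A,A)$ according to the parity of $c$, and bounds the resulting pieces via Corollary\refc{recasting} through a case analysis on which of $m_{00},m_{01},m_1$ (or $m_{10},m_{11},m_0$) dominates. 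To salvage your approach you would at a minimum have to import this modular decomposition into the analysis of the symmetric part $S$, at which point you would essentially be rewriting the paper's proof.
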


The main term of Theorem \reft{principal} is best possible as it is easily
seen by considering the set $A=[-m,m]$, where $m\ge 1$ is an integer. For
this set, the number of triples $(a,b,c)\in A\times A\times A$ with $a-b=2c$
is equal to the number of pairs $(a,b)\in A\times A$ with $a$ and $b$ of the
same parity, which is $(m+1)^2+m^2=0.5|A|^2+0.5$.

It is a challenging problem to generalize our results and investigate the
equations $a\pm b=\lambda c$, for a fixed real parameter $\lambda>0$. As it
follows from \cite[Theorem 1]{b:l}, the number of solutions of this equation
in the elements of a finite set of given size is maximized when $\lam=1$, and
the set is an arithmetic progression, centered around $0$. It would be
interesting to determine the largest possible number of solutions for every
\emph{fixed} value of $\lam\ne 1$, or at least to estimate the maximum over
all positive $\lam\ne 1$.

We remark that using a standard technique, our results extend readily onto
finite subsets of torsion-free abelian groups. In contrast, extending
Theorems \reft{main} and \reft{principal} onto groups with a non-zero torsion
subgroup, and in particular onto cyclic groups, seems to be a highly
non-trivial problem requiring an approach completely different from that used
in the present paper.

In the next section we prepare the ground for the proofs of Theorems
\reft{main} and \reft{principal}. The theorems are then proved in Sections
\refs{main-proof} and \refs{principal-proof}, respectively.

\section{The proofs: preparations}\label{s:toolbox}

For finite sets $A,B$, and $C$ of real numbers, let
  $$ T(A,B,C) := \big| \{ (a,b,c)\in A\times B\times C\colon a+b=2c \} \big| . $$

We start with a simple lemma allowing us to confine to the integer case.
\begin{lemma}\label{l:int-red}
For any finite sets $A$ and $B$ of real numbers, there exist finite sets $A'$
and $B'$ of \emph{integer} numbers with $|A'|=|A|,\ |B'|=|B|$ such that
$T(A',B',A'\cup B')=T(A,B,A\cup B)$ and $T(A',-A',A')=T(A,-A,A)$.
\end{lemma}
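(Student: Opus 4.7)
The plan is to construct a single $\mathbb{Q}$-linear ``Freiman $2$-isomorphism'' from the rational span of $A\cup B$ into $\Z$, and then take $A'$ and $B'$ to be the images of $A$ and $B$ under it. Since $a-b=2c$ is the same as $a+(-b)=2c$, a map $\mathbb{Q}$-linear enough to commute with negation will handle both equalities at once.

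Concretely, I would let $V\subseteq\R$ be the $\mathbb{Q}$-vector subspace spanned by $A\cup B$; it is automatically closed under negation and so contains $-A$. Fix a $\mathbb{Q}$-basis $v_1\longc v_n$ of $V$, and for each $s\in V$ write $s=\sum_i q_i(s)\,v_i$ with uniquely determined rationals $q_i(s)$. Choose a common denominator $D\in\N$ such that $Dq_i(s)\in\Z$ for every $s\in A\cup B$ and every $i$, and set
$$ \phi(s) := \sum_{i=1}^n Dq_i(s)\,M^i, $$
where $M\in\N$ is a positive integer to be fixed in a moment. By construction, $\phi$ is $\mathbb{Q}$-linear, satisfies $\phi(-s)=-\phi(s)$, and takes integer values on $A\cup B\cup(-A)$.

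Next I would verify that, for $M$ sufficiently large, $\phi$ is injective on $A\cup B\cup(-A)$ and, for any $x,y,z$ from this set, one has $\phi(x)+\phi(y)=2\phi(z)$ if and only if $x+y=2z$. The forward direction is immediate from $\mathbb{Q}$-linearity and the linear independence of $v_1\longc v_n$. The converse, together with injectivity, reduces to the familiar positional-numeral observation that a relation $\sum_{i=1}^n\alpha_iM^i=0$ with integer coefficients $|\alpha_i|\le C$ forces all $\alpha_i=0$ as soon as $M>C$; here $C$ is a uniform bound on the integers $D|q_i(x)+q_i(y)-2q_i(z)|$ (or $D|q_i(x)-q_i(y)|$ for injectivity), which is legitimate since $x,y,z$ range over a fixed finite set.

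With $\phi$ so chosen, I would put $A':=\phi(A)$ and $B':=\phi(B)$. Injectivity gives $|A'|=|A|$ and $|B'|=|B|$, while $\mathbb{Q}$-linearity gives $-A'=\phi(-A)$. The preservation and reflection of the equation $x+y=2z$ now induce bijections between the triples counted by $T(A,B,A\cup B)$ and $T(A',B',A'\cup B')$, and between those counted by $T(A,-A,A)$ and $T(A',-A',A')$, yielding both equalities. No substantial obstacle is anticipated; the only point needing mild care is that a \emph{single} choice of $M$ must serve both equalities simultaneously, which is handled automatically by fixing the basis for all of $V$ up front.
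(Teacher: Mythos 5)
Your construction is correct, but it follows a genuinely different route from the paper. The paper's proof approximates the reals directly: by simultaneous Diophantine approximation it picks a large integer $q$ and integers $\phi_q(c)$ with $|c-\phi_q(c)/q|<1/(4q)$ for all $c\in A\cup(-A)\cup B$, so that an integer of the form $\phi_q(a)\pm\phi_q(b)-2\phi_q(c)$ is forced to vanish exactly when $a\pm b-2c=0$ (for $q$ large compared with the reciprocal of the least nonzero value of $|a\pm b-2c|$). You instead build a $\mathbb{Q}$-linear Freiman-type isomorphism: coordinates with respect to a $\mathbb{Q}$-basis of the span, a common denominator $D$, and a base-$M$ encoding with $M$ exceeding the coefficient bound $C$, so that $\sum_i\alpha_iM^i=0$ with $|\alpha_i|\le C<M$ forces all $\alpha_i=0$. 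Both arguments are sound; the oddness of your $\phi$ correctly handles $-A$ and hence both equalities at once, just as the paper's $\phi_q$ does via the inclusion of $-A$ in its domain. Your version is purely algebraic and self-contained (no appeal to an approximation theorem), and it is essentially the ``standard technique'' the paper alludes to when remarking that the results extend to torsion-free abelian groups; the paper's version is shorter to state but rests on an analytic largeness condition on $q$. Either proof is acceptable.
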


\begin{proof}
By the (weak version of the) standard simultaneous approximation theorem,
there exist arbitrary large integer $q\ge 1$, along with an integer-valued
function $\phi_q$ acting on the union $A\cup(-A)\cup B$, such that
  $$ \Big|c - \frac{\phi_q(c)}q\Big| < \frac1{4q},\quad c\in A\cup(-A)\cup B. $$
Let $A':=\phi_q(A)$ and $B':=\phi_q(B)$. It is readily verified that if $q$
is large enough, then $|A'|=|A|$ and $|B'|=|B|$ and, moreover, an equality of
the form $a\pm b=2c$ with $a,b,c\in A\cup(-A)\cup B$ holds true if and only
if $\phi_q(a)\pm \phi_q(b)=2\phi_q(c)$. The assertion follows.
\end{proof}

Clearly, for finite sets of integers $A,B$, and $C$ with $|C|\ge|A|+|B|$, the
number of triples $(a,b,c)\in A\times B\times C$ satisfying $a+b=c$ can be as
large as $|A||B|$. Our argument relies on the following lemma which improves
this trivial bound in the case where $|C|<|A|+|B|$.

\begin{lemma}\label{l:leqfis}
If $A,B$ and $C$ are finite sets of real numbers with
$\max\{|A|,|B|\}\le|C|\le |A|+|B|$, then the number of triples
 $(a,b,c)\in A\times B\times C$ satisfying $a+b=c$ does not exceed
  $$ |A||B| - \frac14\,(|A|+|B|-|C|)^2 + \frac14. $$
\end{lemma}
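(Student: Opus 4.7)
Write $m:=|A|$, $n:=|B|$, $p:=|C|$, $s:=m+n-p$, and sort $A=\{a_1<\ldots<a_m\}$ and $B=\{b_1<\ldots<b_n\}$. The quantity to bound is the size of
$$ U := \{(i,j)\in[1,m]\times[1,n]\colon a_i+b_j\in C\}, $$
viewed inside the grid poset equipped with the coordinatewise partial order. The crucial observation is that along any chain $D$ in this poset the sums $a_i+b_j$ are \emph{strictly} increasing (since $(i,j)<(i',j')$ coordinatewise with at least one strict inequality forces $a_i+b_j<a_{i'}+b_{j'}$), and hence pairwise distinct; thus at most $|C|=p$ of the $|D|$ sums realized along $D$ can belong to $C$, giving $|U\cap D|\le\min\{|D|,p\}$.

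To exploit this inequality I would partition $[1,m]\times[1,n]$ into $\min\{m,n\}$ disjoint chains of sizes $m+n-1,\ m+n-3,\ \ldots,\ |m-n|+1$ (a symmetric chain decomposition). Such a decomposition can be constructed by induction on $\min\{m,n\}$: peel off the ``L-shaped'' chain $\{(1,1),\ldots,(1,n),(2,n),\ldots,(m,n)\}$ of length $m+n-1$ and apply the inductive hypothesis to the remaining sub-grid $[2,m]\times[1,n-1]$. This structural ingredient is the only nontrivial step and constitutes the main obstacle; everything else is direct counting.

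Summing the per-chain bound across the decomposition gives
$$ mn-|U|\ge\sum_{k=1}^{\min\{m,n\}}\max\bigl\{|D_k|-p,\,0\bigr\}. $$
Only the chains with $|D_k|=m+n-2k+1>p$ contribute, which is exactly the range $k\le\lfloor s/2\rfloor$, and each such chain contributes $s-2k+1$. A brief calculation yields $\sum_{k=1}^{\lfloor s/2\rfloor}(s-2k+1)=\lfloor s/2\rfloor\lceil s/2\rceil$, which equals $s^2/4$ when $s$ is even and $(s^2-1)/4$ when $s$ is odd; in either case it is at least $(s^2-1)/4$. This yields $|U|\le mn-(s^2-1)/4=|A||B|-\tfrac14(|A|+|B|-|C|)^2+\tfrac14$, as required. (The hypothesis $\max\{|A|,|B|\}\le|C|$ is what guarantees $s\le\min\{m,n\}$, so the chain decomposition reaches far enough to accommodate all positive contributions in the sum.)
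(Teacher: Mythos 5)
Your proof is correct. It is worth noting, though, that its engine is the same as the paper's: the paper's inductive step removes $a_{\min}$ from $A$ and $b_{\max}$ from $B$ and observes that each $c\in C$ has at most one representation among the pairs $(a_{\min},b)$ and $(a,b_{\max})$ --- which is precisely your observation that the sums are pairwise distinct along the L-shaped chain $(1,1),\ldots,(1,n),(2,n),\ldots,(m,n)$, so at most $|C|$ of them land in $C$. The paper then recurses on the $(|A|-1)\times(|B|-1)$ sub-grid, stopping as soon as $|A|+|B|-|C|\le 1$; you instead unroll the recursion completely into a symmetric chain decomposition and sum the per-chain losses $\max\{|D_k|-p,0\}$ in one shot. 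The trade-off is minor: your version requires explicitly constructing and bookkeeping the full decomposition (and checking, as you correctly do, that the hypothesis $\max\{|A|,|B|\}\le|C|$ ensures all contributing chains exist), in exchange for which you get the slightly cleaner integer bound $|A||B|-\lfloor s/2\rfloor\lceil s/2\rceil$ directly, rather than via an induction whose base case supplies the $+\frac14$. All the details --- the hook-peeling construction, the strict monotonicity of sums along a chain, and the evaluation of $\sum_{k\le\lfloor s/2\rfloor}(s-2k+1)$ --- check out.
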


\begin{proof}
We use induction on $|A|+|B|-|C|$. The case where $|A|+|B|-|C|\le 1$ is
immediate, and we thus assume that $|A|+|B|-|C|\ge 2$. If either $A$ or $B$
is empty, then the assertion is readily verified. Otherwise, we let
$a_{\min}:=\min A$ and $b_{\max}:=\max B$, and observe that every $c\in C$
has at most one representation as $c=a_{\min}+b$ with $b\in B$, or of the form
$c=a+b_{\max}$ with $a\in A$. Indeed, the same element
$c\in C$ cannot have representations of both kinds simultaneously, unless
they are identical: for, $a_{\min}+b=a+b_{\max}$ yields
$b-a=b_{\max}-a_{\min}$, whence $a=a_{\min}$ and $b=b_{\max}$. This shows
that removing $a_{\min}$ form $A$, and simultaneously $b_{\max}$ from $B$, we
loose at most $|C|$ triples $(a,b,c)\in A\times B\times C$ with $a+b=c$.
Using now the induction hypothesis to estimate the number of such triples
with $a\ne a_{\min}$ and $b\ne b_{\max}$, we conclude that the total number
of triples under consideration is at most
\begin{multline*}
  |C| + (|A|-1)(|B|-1) - \frac14\,(|A|+|B|-2-|C|)^2 + \frac14 \\
                         = |A||B| -  \frac14\,(|A|+|B|-|C|)^2 + \frac14.
\end{multline*}
\end{proof}

We note that Lemma \ref{l:leqfis} can also be deduced from the following
proposition, which is a particular case of \cite[Theorem~1]{b:l}; see
\cite{b:g,b:hl,b:hlp} for earlier, slightly weaker versions.

For a finite set $A$ of real numbers, write
$\med(A):=\frac12\big(\min(A)+\max(A)\big)$.
\begin{proposition}\label{p:leqfis}
Let $A,B$, and $C$ be finite sets of integers. If $A',B'$, and $C'$ are
blocks of consecutive integers such that $\med(C')$ is at most $0.5$ off from
$\med(A')+\med(B')$, and $|A'|=|A|,\ |B'|=|B|,\ |C'|=|C|$, then the number
of triples $(a,b,c)\in A\times B\times C$ with $a+b=c$ does not exceed the
number of triples $(a',b',c')\in A'\times B'\times C'$ with $a'+b'=c'$.
\end{proposition}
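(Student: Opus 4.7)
The plan is to reformulate the count in terms of the representation function $r(c):=|\{(a,b)\in A\times B\colon a+b=c\}|$ (and its interval analogue $r'$), noting that the number of triples to be bounded equals $\sum_{c\in C}r(c)$ and the target count equals $\sum_{c'\in C'}r'(c')$. By the standard rearrangement inequality applied to $r$ against $1_C$, the quantity $\sum_{c\in C}r(c)$ is at most the sum of the $|C|$ largest values of $r$; so it will suffice to establish two facts: (i) the sum of the $|C|$ largest values of $r$ does not exceed the sum of the $|C|$ largest values of $r'$, and (ii) this latter quantity equals $\sum_{c'\in C'}r'(c')$ for any block $C'$ satisfying the hypothesis on $\med(C')$.

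Claim (ii) is a routine verification. Because $A'$ and $B'$ are blocks of consecutive integers, the function $r'$ is a symmetric piecewise-linear ``tent'' supported on the integer interval $A'+B'$ and unimodal about the centre $\med(A')+\med(B')$. The set of the top $|C|$ values of such a tent is therefore itself a block of consecutive integers centred at the mode; the $0.5$ slack allowed in the hypothesis on $\med(C')$ is exactly what is needed to absorb the parity ambiguities, namely whether $|C|$ is even or odd and whether the tent attains its maximum at one or at two neighbouring integers.

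Claim (i) is the substance of the proposition and is precisely the content of~\cite[Theorem~1]{b:l}: for finite integer sets $A$ and $B$ of given cardinalities, the decreasing rearrangement of the convolution $1_A*1_B$ is dominated pointwise by the decreasing rearrangement of $1_{A'}*1_{B'}$, where $A',B'$ are any integer intervals of the corresponding sizes. Passing to partial sums of the top $|C|$ values, (i) follows immediately. The hard work is thus offloaded to this black box; in a self-contained treatment the main obstacle would be precisely the pointwise rearrangement domination, typically established by an iterative compression argument that replaces $A$ and $B$ by progressively ``more interval-like'' sets and tracks the effect on each level set $\{c:r(c)\ge t\}$, which is delicate because a shift that helps one level set may momentarily hurt another.
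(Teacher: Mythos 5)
The paper does not prove Proposition\refp{leqfis} at all: it presents it as ``a particular case of \cite[Theorem~1]{b:l}'' and moves on, so the ``official'' proof is a one-line citation to Lev's theorem, which is stated directly in the form of the proposition (with aligned integer intervals extremal for the count of solutions of the linear equation). Your approach ultimately rests on the same black box, so in that sense it is the same route; the extra scaffolding you add via the representation function $r$ is where the trouble lies.

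The intermediate claim you make --- that the decreasing rearrangement of $1_A * 1_B$ is \emph{pointwise} dominated by that of $1_{A'} * 1_{B'}$ --- is false, and it is not what \cite[Theorem~1]{b:l} says. Take $A=\{0,1\}$ and $B=\{0,2\}$; then $r$ equals $1$ on $\{0,1,2,3\}$, so its decreasing rearrangement is $(1,1,1,1,0,\dots)$, whereas for $A'=B'=\{0,1\}$ the rearrangement of $r'$ is $(2,1,1,0,\dots)$. At the fourth position $1>0$, so pointwise domination fails. What is true, and what you actually need, is the weaker \emph{partial-sum} (Hardy--Littlewood majorization) domination $\sum_{i\le k} r^*(i)\le \sum_{i\le k} (r')^*(i)$ for every $k$; taking $k=|C|$ yields your claim~(i). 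This partial-sum statement is equivalent to the ``extremal for all $C$'' form of the proposition and is exactly what Lev's theorem delivers, so after replacing ``pointwise'' by ``partial-sum'' your argument goes through. Claim~(ii), the unimodality and symmetry of the tent $r'$ over $A'+B'$ together with the $0.5$ slack on $\med(C')$, is indeed routine. So: correct conclusion, same ultimate citation as the paper, but one of your supporting assertions is outright false and needs to be weakened to the majorization form.
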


Loosely speaking, Proposition \refp{leqfis} says that the number of solutions
of $a+b=c$ in the variables $a\in A,\ b\in B$, and $c\in C$ is maximized when
$A,B$, and $C$ are blocks of consecutive integers, located so that $C$
captures the integers with the largest number of representations as a sum of
an elements from $A$ and an element from $B$. We leave it to the reader to
see how Lemma \ref{l:leqfis} can be derived from Proposition \ref{p:leqfis}.

%

We use Lemma \refl{leqfis} to estimate the quantity $T(A,B,C)$, which is the
number of solutions of $a+b=c'$ with $a\in A,\ b\in B$, and $c'\in\{2c\colon
c\in C\}$. It is also convenient to recast the estimate of the lemma in terms
of the function $G$ which we define as follows: if $(\xi,\eta,\zet)$ is a
non-decreasing rearrangement of the triple $(x,y,z)$ of real numbers, then we
let
  $$ G(x,y,z) := \begin{cases}
                   \xi\eta &\ \text{if}\ \zet\ge \xi+\eta, \\
                   \xi\eta-\frac14\,(\xi+\eta-\zet)^2 &\ \text{if}\ \zet\le \xi+\eta.
                 \end{cases} $$
Thus, for instance, we have $G(9,6,7)=38$, whereas $G(7,14,6)=42$.

\begin{corollary}\label{c:recasting}
If $A,B$ and $C$ are finite sets of integers, then
  $$ T(A,B,C) \le G(|A|,|B|,|C|)+\frac14. $$
\end{corollary}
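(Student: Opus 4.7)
The plan is to deduce the corollary directly from Lemma \refl{leqfis}. Setting $C':=\{2c\colon c\in C\}$, the map $c\mapsto 2c$ is a bijection, so $|C'|=|C|$ and
\[
T(A,B,C)=\big|\{(a,b,c')\in A\times B\times C'\colon a+b=c'\}\big|.
\]
Thus $T(A,B,C)$ counts representations of elements of $C'$ as ordinary sums of elements from $A$ and $B$, and any estimate on the plain sum equation may be brought to bear.

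Next I would exploit the fact that $a+b=c'$ is equivalent both to $c'+(-b)=a$ and to $c'+(-a)=b$. Replacing $B$ by $-B$, or $A$ by $-A$, changes neither the three cardinalities $|A|,|B|,|C|$ nor the number of solutions being counted, so up to such reflections one may freely permute which of the three sets plays the role of the ``sum-side'' set. Letting $(\xi,\eta,\zeta)$ be the non-decreasing rearrangement of $(|A|,|B|,|C|)$, I would arrange by one such relabelling that the set of size $\zeta$ plays the role of the sum set, while the two sets of sizes $\xi$ and $\eta$ play the role of the summands.

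Two cases then finish the argument. If $\zeta\ge\xi+\eta$, the bound $T(A,B,C)\le\xi\eta=G(|A|,|B|,|C|)$ is immediate, because any two of the coordinates $a,b,c'$ determine the third. If $\zeta\le\xi+\eta$, then $\max\{\xi,\eta\}\le\zeta\le\xi+\eta$, the hypotheses of Lemma \refl{leqfis} are met in the reformulated problem, and the lemma yields
\[
T(A,B,C)\le\xi\eta-\tfrac14(\xi+\eta-\zeta)^2+\tfrac14=G(|A|,|B|,|C|)+\tfrac14.
\]

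The only point that requires a little care is the symmetry step: Lemma \refl{leqfis} is stated with the sum-side set forced to be at least as large as each summand set, so before invoking it one must perform the appropriate reflection (and corresponding relabelling of which set is the sum) depending on how $|A|,|B|,|C|$ compare. This is pure bookkeeping, and I do not anticipate any genuine obstacle beyond making the case split clean.
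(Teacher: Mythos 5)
Your proof is correct and follows exactly the route the paper intends (the paper states the corollary without a written proof, merely remarking that $T(A,B,C)$ counts solutions of $a+b=c'$ with $c'\in\{2c\colon c\in C\}$ and that Lemma\refl{leqfis} is recast via $G$). The doubling of $C$, the reflection trick to make the largest set play the sum-side role, and the two-case split according to whether $\zeta\ge\xi+\eta$ are precisely the intended bookkeeping.
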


We close this section with two lemmas used in the proofs of Theorems
\reft{main} and \reft{principal}, respectively.

For real $x$, we let $x_+:=\max\{x,0\}$ and use $x_+^2$ as an abbreviation
for $(x_+)^2$.
\begin{lemma}\label{l:balancingI}
For any real $x,y$, and $z$, we have
  $$ G\Big(\frac{x+y}2,\frac{x+y}2,z\Big)
       = G(x,y,z) + \frac14\,(x-y)^2 - \frac14\,\big(|x-y|-z\big)_+^2. $$
\end{lemma}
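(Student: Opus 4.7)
The plan is to verify the identity by a direct case analysis after normalizing the parameters. Since $G$ depends only on the non-decreasing rearrangement of its arguments, both sides of the claimed identity are symmetric in $x$ and $y$, so I may assume $x\le y$ and set $s:=x+y$, $d:=y-x\ge 0$; then $x=(s-d)/2$, $y=(s+d)/2$, $xy=(s^2-d^2)/4$, and the identity becomes
\[
G(s/2,\,s/2,\,z)-G(x,y,z)\;=\;\tfrac14 d^2-\tfrac14 (d-z)_+^2.
\]
The application only requires $x,y,z\ge 0$, so I tacitly take $x\ge 0$ and $z\ge 0$, whence $0\le d\le s$.

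The main step is to obtain a clean piecewise formula for $G(x,y,z)$ and then specialize it. A straightforward split on the position of $z$ in the sorted triple gives four subcases: $G(x,y,z)=xz$ for $0\le z\le d$; $G(x,y,z)=xz-(z-d)^2/4$ for $d\le z\le y$; $G(x,y,z)=xy-(s-z)^2/4$ for $y\le z\le s$; and $G(x,y,z)=xy$ for $z\ge s$. A brief algebraic check, using $xy=(s^2-d^2)/4$ together with $x+d/2=s/2$, shows that the two middle expressions coincide and equal $\tfrac12 sz-\tfrac14 z^2-\tfrac14 d^2$; hence the unified description
\[
G(x,y,z)=\begin{cases}xz,&0\le z\le d,\\[2pt]\tfrac12 sz-\tfrac14 z^2-\tfrac14 d^2,&d\le z\le s,\\[2pt]\tfrac14(s^2-d^2),&z\ge s.\end{cases}
\]
Specializing to $d=0$ yields $G(s/2,s/2,z)=\tfrac12 sz-\tfrac14 z^2$ on $[0,s]$ and $G(s/2,s/2,z)=s^2/4$ for $z\ge s$.

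The proof is then completed by subtracting piece against piece. On $[0,d]$ the difference equals $(\tfrac12 sz-\tfrac14 z^2)-xz=\tfrac12 dz-\tfrac14 z^2$, which coincides with $\tfrac14 d^2-\tfrac14(d-z)^2=\tfrac14 d^2-\tfrac14(d-z)_+^2$ in that range; and on $[d,\infty)$ the corresponding pairs of pieces differ by exactly $\tfrac14 d^2$, matching $\tfrac14 d^2-\tfrac14(d-z)_+^2$ since $(d-z)_+=0$ there. No step presents any real obstacle; the only work is organizing the case split, and the observation that really streamlines everything is the collapse of the two middle subcases of $G(x,y,z)$ into a single quadratic in $z$, independent of the comparison between $z$ and $y$.
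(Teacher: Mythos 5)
Your verification is correct and is essentially the paper's own (omitted) argument: a direct case check after assuming $x\le y$, merely reorganized through the variables $s=x+y$, $d=y-x$, with the pleasant observation that the two middle cases collapse. Your explicit restriction to $x,y,z\ge 0$ is not a gap but a necessary (and apt) caveat: the identity as literally stated for \emph{all} reals fails --- e.g.\ for $(x,y,z)=(-2,0,1)$ one has $G(-1,-1,1)=1$ while $G(-2,0,1)+\frac14(x-y)^2-\frac14\big(|x-y|-z\big)_+^2=0+1-\frac14=\frac34$ --- and the nonnegative case is all that the application in Section~\ref{s:main-proof} requires.
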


\begin{corollary}\label{c:balancingI}
For any real $x,y$, and $z$, we have
  $$ G\Big(\frac{x+y}2,\frac{x+y}2,z\Big) \ge G(x,y,z). $$
\end{corollary}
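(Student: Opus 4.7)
The plan is to deduce Corollary \refc{balancingI} directly from Lemma \refl{balancingI}. That lemma supplies the exact identity
$$ G\Big(\tfrac{x+y}{2},\tfrac{x+y}{2},z\Big) - G(x,y,z) = \tfrac14(x-y)^2 - \tfrac14\big(|x-y|-z\big)_+^2, $$
so the corollary reduces to showing that the right-hand side is nonnegative; equivalently, that $(|x-y|-z)_+ \le |x-y|$.

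To establish this bound, I would simply unwind the definition of the positive-part notation: $(|x-y|-z)_+ = \max\{|x-y|-z,\ 0\}$. In the regime where $G$ is applied in this paper (namely, to the cardinalities $|A|,|B|,|C|$, so $z\ge 0$), both arguments of the maximum lie in $[0,\,|x-y|]$, and hence $(|x-y|-z)_+ \le |x-y|$. Squaring, $(|x-y|-z)_+^2 \le (x-y)^2$, which is exactly the desired nonnegativity of the right-hand side above.

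I do not expect any genuine obstacle: all of the substantive work sits in Lemma \refl{balancingI} itself, which requires casework on the relative order of $x,y,z$ and on which of the two branches of the definition of $G$ applies on each side of the identity. Once the lemma is in hand, the corollary is just the observation that the nonnegative term $\tfrac14(x-y)^2$ dominates the nonpositive correction $-\tfrac14(|x-y|-z)_+^2$.
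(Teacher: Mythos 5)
Your derivation is exactly the intended one: the paper gives no separate proof of the corollary, treating it as immediate from the identity of Lemma\refl{balancingI} together with the observation that $\big(|x-y|-z\big)_+^2\le(x-y)^2$ once $z\ge 0$. Your remark that $z\ge 0$ is genuinely needed is well taken --- as literally stated ``for any real $z$'' the corollary (and the lemma's identity) fails for negative $z$, e.g.\ $G(1,1,-1)=-1<0=G(0,2,-1)$, but this is harmless since the paper only applies it with $z=1-s\ge 0$ and nonnegative $x,y$.
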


\begin{lemma}\label{l:x=y}
If $x$ and $z$ are real numbers with $z\le 2x$, then $G(x,x,z)\le
xz-\frac14\,z^2$.
\end{lemma}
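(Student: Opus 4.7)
The plan is a direct verification from the definition of $G$, via a case split on the position of $z$ relative to $x$. Recall that $G(x,x,z)$ is computed by sorting the triple $(x,x,z)$ in nondecreasing order as $(\xi,\eta,\zeta)$ and then applying the relevant piece of the definition. In the intended regime, where $x$ and $z$ arise as cardinalities and are thus nonnegative, the hypothesis $z\le 2x$ is exactly what is needed to land in the second piece of $G$, which yields a formula matching $xz-\tfrac14 z^2$ precisely.

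Concretely, I would split into two subcases. If $0\le z\le x$, the sorted triple is $(z,x,x)$, so $\xi+\eta-\zeta = z\ge 0$, placing us in the second piece of the definition; this gives
$$ G(x,x,z) = zx - \tfrac14 z^2. $$
If instead $x\le z\le 2x$, the sorted triple is $(x,x,z)$ and the assumption $z\le 2x$ guarantees $\xi+\eta-\zeta = 2x-z\ge 0$, again placing us in the second piece, so
$$ G(x,x,z) = x^2 - \tfrac14 (2x-z)^2. $$
A routine expansion of $(2x-z)^2$ shows that this quantity also equals $xz-\tfrac14 z^2$, so in fact equality holds in both subcases.

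There is essentially no obstacle: the only delicate point is checking that the two subcases cover all of $0\le z\le 2x$ and that in each subcase the hypothesis $z\le 2x$ forces $\zeta\le\xi+\eta$. If one dropped the hypothesis $z\le 2x$, then in the second subcase we would land in the first piece of the definition (giving the strictly larger value $x^2$), so the bound $xz-\tfrac14 z^2$ would fail. I would note in passing that the lemma thus holds with equality under the natural nonnegativity assumption, and the inequality form is stated simply because that is all that will be used later.
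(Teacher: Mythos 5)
Your proof is correct and matches the approach the paper indicates for its (omitted) argument: the paper says the proof goes by the two cases $x\le z$ and $x\ge z$, which is exactly your case split, and in each case one lands in the second branch of the definition of $G$ and obtains equality with $xz-\frac14\,z^2$. Your side remark is also apt: for $z<0$ the sorted triple is $(z,x,x)$ with $\zeta=x\ge z+x=\xi+\eta$, so $G(x,x,z)=xz>xz-\frac14\,z^2$, meaning the lemma genuinely needs the implicit assumption $z\ge 0$, which holds in every application in the paper (there $z$ is $1-s$ with $s\le 1$, or a cardinality).
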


To prove Lemma \refl{balancingI} one can assume $x\le y$ (which does not
restrict the generality) and verify the assertion in the four possible
cases $z\le x$, $x\le z\le(x+y)/2$, $(x+y)/2\le z\le y$, and $z\ge y$.
The proof of Lemma \refl{x=y} goes by straightforward investigation of
the two cases $x\le z$ and $x\ge z$. We omit the details.

\section{Proof of Theorem \reft{main}}\label{s:main-proof}

We use induction on $|A|+|B|$.

By Lemma \refl{int-red}, we can assume that $A$ and $B$ are sets of
integers. For $i,j\in\{0,1\}$ let $A_i:=\{a\in A\colon a\equiv i\pmod 2\}$
and $A_{ij}:=\{a\in A\colon a\equiv i+2j\pmod 4\}$, and define $B_i$ and
$B_{ij}$ in a similar way. Also, write $m:=|A|,\ m_i:=|A_i|,\
m_{ij}:=|A_{ij}|,\ n:=|B|,\ n_i:=|B_i|$, and $n_{ij}:=|B_{ij}|$. Applying a
suitable affine transformation to $A$ and $B$, we can assume without loss of
generality that $A\cup B$ contains both even and odd elements, and the total
number of even elements in $A$ and $B$ is at least as large as the total
number of odd elements:
\begin{equation}\label{e:assumptions}
   0<m_1+n_1\le m_0+n_0<m+n.
\end{equation}

Keeping the notation introduced at the beginning of Section \refs{toolbox},
we want to estimate the quantity $T(A,B,A\cup B)$. Observing that $a+b=2c$
implies that $a$ and $b$ are of the same parity, we write
\begin{equation}\label{e:decomposition}
  T(A,B,A\cup B) = T(A_0,B_0,A_0\cup B_0) + T(A_0,B_0,A_1\cup B_1)
                                                  + T(A_1,B_1,A\cup B)
\end{equation}
and estimate separately each of the three summands in the right-hand side.

For the first summand, we notice that $a_0+b_0=2c_0$ with $a_0\in A_0$,
$b_0\in B_0$, and $c_0\in A_0\cup B_0$, implies that $a_0/2$ and $b_0/2$ are
of the same parity. Hence, either $a_0\in A_{00}$ and $b_0\in B_{00}$, or
$a_0\in A_{01}$ and $b_0\in B_{01}$, leading to the upper bound
$m_{00}n_{00}+m_{01}n_{01}$. On the other hand, we can use induction (cf.
\refe{assumptions}) to estimate the first summand by
$0.15(m_0+n_0)^2+0.5(m_0+n_0)$. As a result,
\begin{equation}\label{e:first-summand}
  T(A_0,B_0,A_0\cup B_0)
     \le \min \{ 0.15(m_0+n_0)^2, m_{00}n_{00}+m_{01}n_{01} \} + 0.5(m_0+n_0).
\end{equation}

Similar parity considerations show that if $a_0+b_0=2c_1$ with $a_0\in A_0$,
$b_0\in B_0$, and $c_1\in A_1\cup B_1$, then either $a_0\in A_{00}$ and
$b_0\in B_{01}$, or $a_0\in A_{01}$ and $b_0\in B_{00}$. Therefore, using
Corollary \refc{recasting}, we get
\begin{align}\label{e:second-summand}
  T(A_0,B_0,A_1\cup B_1)
    &= T(A_{00},B_{01},A_1\cup B_1) + T(A_{01},B_{00},A_1\cup B_1) \notag \\
    &\le G(m_{00},n_{01},m_1+n_1)+G(m_{01},n_{00},m_1+n_1) + 0.5.
\end{align}

For the last summand in \refe{decomposition} we use the trivial estimate
\begin{equation}\label{e:third-summand}
  T(A_1,B_1,A\cup B) \le m_1n_1 \le 0.25(m_1+n_1)^2.
\end{equation}
Substituting \refe{first-summand}--\refe{third-summand} into
\refe{decomposition}, we get
\begin{align}\label{e:res-est}
  T(A,B,A\cup B)
    &\le \min \{ 0.15(m_0+n_0)^2, m_{00}n_{00}+m_{01}n_{01} \} \notag \\
    &\qquad + G(m_{00},n_{01},m_1+n_1)+G(m_{01},n_{00},m_1+n_1) \notag \\
    &\qquad + \frac14\,(m_1+n_1)^2 + 0.5(m_0+n_0)+0.5.
\end{align}
Recalling \refe{assumptions}, we estimate the remainder terms as
  $$ 0.5(m_0+n_0) + 0.5 \le 0.5(m+n). $$
To estimate the main term, for real $x_0,x_1,y_0,y_1$ we write
\begin{equation}\label{e:xy}
  s := x_0+x_1+y_0+y_1
\end{equation}
and let
\begin{align}\label{e:f-def}
  f(x_0,x_1,y_0,y_1)
    &:= \min \{ 0.15s^2, x_0y_0+x_1y_1 \} \notag \\
    &\qquad + G(x_0,y_1,1-s) + G(x_1,y_0,1-s) \notag \\
    &\qquad + 0.25(1-s)^2.
\end{align}
Remainder terms dropped, the right-hand side of \refe{res-est} can then be
written as $(m+n)^2f(\xi_0,\xi_1,\eta_0,\eta_1)$, where
  $$ \xi_0:=\frac{m_{00}}{m+n},\ \xi_1:=\frac{m_{01}}{m+n},
                     \ \eta_0:=\frac{n_{00}}{m+n},
                             \ \text{and}\ \eta_1:=\frac{n_{01}}{m+n}. $$

With \refe{assumptions} in mind, we see that to complete the argument it
suffices to prove the following lemma.
\begin{lemma}\label{l:inequality}
For the function $f$ defined by \refe{xy}--\refe{f-def}, we have
  $$ \max \{ f(x_0,x_1,y_0,y_1)
                \colon x_0,x_1,y_0,y_1\ge 0,\ 1/2\le s\le 1 \} \le 0.15. $$
\end{lemma}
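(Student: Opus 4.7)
The plan is to reduce the four-variable optimization by introducing the centers $u := (x_0+y_1)/2$, $v := (x_1+y_0)/2$ and the displacements $\alpha := (x_0-y_1)/2$, $\beta := (x_1-y_0)/2$, together with $t := 1-s \in [0,1/2]$. Then $u + v = s/2$, $|\alpha| \le u$, $|\beta| \le v$, and a direct expansion gives the identity $x_0 y_0 + x_1 y_1 = 2uv - 2\alpha\beta$. Lemma \refl{balancingI} supplies the parallel identity
\[
G(x_0, y_1, t) + G(x_1, y_0, t) = G(u, u, t) + G(v, v, t) - D(\alpha) - D(\beta),
\]
where $D(\alpha) := \alpha^2 - ((2|\alpha|-t)_+)^2/4 \ge 0$. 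Substituting these converts $f$ into an explicit expression in $(u, v, \alpha, \beta, t)$, and Lemma \refl{x=y} gives a piecewise-polynomial bound for $G(u,u,t) + G(v,v,t)$ (namely $ut - t^2/4$ when $u \ge t/2$, and $u^2$ otherwise, with the analogous formula for $v$).

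First I would optimize over $\alpha, \beta$ for fixed $u, v, t$. Since $u + v = s/2$, AM-GM yields $2uv \le s^2/8 < 0.15\,s^2$, so the cap in $\min\{0.15\,s^2, 2uv - 2\alpha\beta\}$ is attained only if $\alpha\beta$ is sufficiently negative. A short case check on the signs of $\alpha\beta$ and on whether $|\alpha|, |\beta|$ exceed $t/2$ establishes the pointwise inequality
\[
-2\alpha\beta - D(\alpha) - D(\beta) \le \tfrac{1}{2}(2|\alpha|-t)_+(2|\beta|-t)_+.
\]
Together with the identity rewriting of $f$, this implies that the $\alpha,\beta$-maximum of $f$ equals either $2uv + G(u,u,t) + G(v,v,t) + t^2/4$ (when $c_0 := (0.15\,s^2 - 2uv)/2$ satisfies $\sqrt{c_0} \le t/2$) or $0.15\,s^2 + G(u,u,t) + G(v,v,t) - 2t\sqrt{c_0} + 3t^2/4$ (when $\sqrt{c_0} > t/2$).

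Second I would optimize over $u, v$ with $u + v = (1-t)/2$. By AM-GM $uv \le (1-t)^2/16$, with equality at $u = v = (1-t)/4$. Plugging this into the two cases reduces the lemma to single-variable inequalities in $t \in [0, 1/2]$. In the first case one obtains $f \le \frac{1}{8}(1-t)^2 + \frac{1}{2}t(1-t) - \frac{1}{4}t^2$, and the difference $0.15 - f$ factors as $\frac{5}{8}(t - 1/5)^2 \ge 0$, with equality at $t = 1/5$ matching the symmetric extremal configuration from Example \refx{main}. In the second case one obtains a bound easily checked to remain below $0.15$ on its range, attaining $0.15$ only at $t = 0$ (which corresponds to another extremal configuration of the same Example). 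The boundary sub-case in which $u \le t/2$ or $v \le t/2$ is handled analogously using the $G(u,u,t) = u^2$ branch.

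The main obstacle is the $\alpha,\beta$ sub-case $|\alpha|, |\beta| \ge t/2$ with $\alpha\beta < 0$: there $D(\alpha) + D(\beta) = t(|\alpha|+|\beta|) - t^2/2$ can be strictly smaller than $2|\alpha\beta|$, so the naive symmetrization of Corollary \refc{balancingI} is not by itself strong enough to produce the right constant. The factorization $(2|\alpha|-t)(2|\beta|-t)$ captures exactly this gap, and it is the cap $0.15\,s^2$ in the $\min$ term that ultimately absorbs it; this delicate interplay is what forces the tight constant $0.15$ to appear precisely at $t = 1/5$.
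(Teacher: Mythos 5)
Your reduction is genuinely different from ours and, in its first half, it is correct and arguably cleaner. We also symmetrize $(x_0,y_1)$ and $(x_1,y_0)$, but we only split into the dichotomy ``balancing does not decrease $f$'' versus the explicit inequalities\refe{x0large}--\refe{312}, which leads to our Claims\refm{claim1} and\refm{claim2}. You instead keep an exact account of the loss under balancing: the identities $x_0y_0+x_1y_1=2uv-2\alpha\beta$ and (via Lemma\refl{balancingI}) $G(x_0,y_1,t)+G(x_1,y_0,t)=G(u,u,t)+G(v,v,t)-D(\alpha)-D(\beta)$ are right, your pointwise inequality $-2\alpha\beta-D(\alpha)-D(\beta)\le\frac12(2|\alpha|-t)_+(2|\beta|-t)_+$ checks out in all sign/threshold cases (with equality when $\alpha\beta<0$ and $|\alpha|,|\beta|\ge t/2$), and the resulting two-case description of the $\alpha,\beta$-maximum, governed by whether $\sqrt{c_0}\lessgtr t/2$ with $c_0=0.075s^2-uv$, is correct. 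Your Case~1 is then literally our Claim\refm{claim1}, and your $t=1/5$ computation there matches our Case~II of that claim.

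The gap is in your second case. You say one ``plugs in $uv\le(1-t)^2/16$'' and obtains a bound ``easily checked to remain below $0.15$.'' Doing exactly that (with $G(u,u,t)+G(v,v,t)\le\frac{st}{2}-\frac{t^2}{2}$) gives
\begin{equation*}
0.15s^2+\frac{st}{2}+\frac{t^2}{4}-\frac{st}{2\sqrt5}
   \;=\;0.15-\Big(\frac{\sqrt5-4}{10}\Big)t+\Big(\frac{\sqrt5+3}{40}+\frac{\sqrt5}{10}-\frac14\Big)\cdots
\end{equation*}
--- numerically $0.15-0.0236t+0.1236t^2$ --- which \emph{exceeds} $0.15$ for all $t>0.191$, so the claim as stated is false on most of the range $t\in[0,1/2]$. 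What saves the argument is that for $t>1/(1+2\sqrt5)\approx0.1827$ the point $u=v=s/4$ no longer lies in Case~2 at all: the case-defining constraint $\sqrt{c_0}>t/2$ forces $uv<0.075s^2-t^2/4<s^2/16$, and maximizing at that boundary instead yields $0.15+0.2t-1.1t^2$, which is $\le0.15$ precisely for $t\ge2/11\approx0.1818$. The two regimes thus cover $[0,1/2]$ with a margin of under $0.01$ in $t$, so the step is not ``easy'' --- it is exactly where the constant $0.15$ is nearly attained a second time --- and it must be spelled out; the same goes for the sub-cases $u<t/2\le v$, where the crude bound $-2t\sqrt{c_0}\le-t^2$ is again insufficient and the precise value of $\sqrt{c_0}$ must be retained. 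With these verifications supplied, your route does close, and it would constitute a valid alternative to our Claim\refm{claim2}.
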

The inequality of Lemma \refl{inequality} is surprisingly delicate, and the
proof presented in the remaining part of this section is rather tedious. The
reader trusting us about the proof may wish to skip on to Section
\refs{principal-proof}, where the proof of Theorem \reft{principal}
(independent of Theorem \reft{main}) is given.

\begin{proof}[Proof of Lemma \refl{inequality}]
Since $f(x_0,x_1,y_0,y_1)=f(y_0,y_1,x_0,x_1)$, switching, if necessary, $x_0$
with $y_0$, and $x_1$ with $y_1$, we can assume that
\begin{equation}\label{e:x>y}
  x_0+x_1 \ge y_0+y_1.
\end{equation}
Similarly, $f(x_0,x_1,y_0,y_1)=f(x_1,x_0,y_1,y_0)$ shows that $x_0$ can be
switched with $x_1$, and $y_0$ with $y_1$ to ensure that
\begin{equation}\label{e:0>1}
  x_0+y_0 \ge x_1+y_1.
\end{equation}
(Observe, that switching $x_0$ with $x_1$ and $y_0$ with $y_1$ does not
affect \refe{x>y}.) Thus, from now on we assume that \refe{x>y} and
\refe{0>1} hold true.

Our big plan is to investigate the effect made on $f$ by replacing the
variables $x_0$ and $y_1$ with their average $(x_0+y_1)/2$, and,
simultaneously, replacing the variables $x_1$ and $y_0$ with their average
$(x_1+y_0)/2$. We show that either
\begin{equation}\label{e:balancingOk}
  f\Big(\frac{x_0+y_1}2,\frac{x_1+y_0}2,\frac{x_1+y_0}2,\frac{x_0+y_1}2\Big)
                                                       \ge f(x_0,x_1,y_0,y_1)
\end{equation}
(meaning that $f$ is non-decreasing under such ``balancing''), or
\begin{gather}
   x_0 \ge y_1+(1-s), \label{e:x0large} \\
   y_0 \ge x_1+(1-s), \label{e:y0large}
\intertext{and}
   3(x_0+y_0)+(x_1+y_1) \ge 2. \label{e:312}
\end{gather}
In both cases, the problem reduces to maximizing a function in just two
variables.

We thus assume that \refe{balancingOk} fails, aiming to prove that
\refe{x0large}--\refe{312} hold true. Along with \refe{f-def} and Corollary
\refc{balancingI}, our assumption implies
  $$ \frac12\,(x_0+y_1)(x_1+y_0) < x_0y_0+x_1y_1, $$
simplifying to
  $$ (x_0-y_1)(x_1-y_0) < 0. $$
Writing \refe{0>1} as $x_0-y_1\ge x_1-y_0$, we conclude that
\begin{equation}\label{e:skew}
  x_0>y_1 \ \text{and}\ y_0>x_1
\end{equation}
(which the reader may wish to compare with \refe{x0large} and
\refe{y0large}).

Let
  $$ \Old := x_0y_0+x_1y_1 + G(x_0,y_1,1-s) + G(x_1,y_0,1-s) $$
and
  $$ \New := \frac12\,(x_0+y_1)(x_1+y_0)
       + G\Big(\frac{x_0+y_1}2,\frac{x_0+y_1}2,1-s \Big)
       + G\Big(\frac{x_1+y_0}2,\frac{x_1+y_0}2,1-s \Big) $$
(the script letters standing for ``old'' and ``new''); thus, $\New<\Old$ by
the assumption that \refe{balancingOk} fails, \refe{f-def}, and Corollary
\refc{balancingI}. From Lemma \refl{balancingI} and \refe{skew} we get
\begin{align*}
  \New-\Old
    &= \frac12\,(x_0-y_1)(x_1-y_0)
          + \frac14\,(x_0-y_1)^2 - \frac14\,(|x_0-y_1|-(1-s))_+^2 \\
    &\phantom{= \frac12\,(x_0-y_1)(x_1-y_0)\ }
          + \frac14\,(x_1-y_0)^2 - \frac14\,(|x_1-y_0|-(1-s))_+^2 \\
    &= \frac14\,(x_0+x_1-y_0-y_1)^2  - \frac14\,(x_0-y_1-(1-s))_+^2 \\
    &\phantom{= \frac14\,(x_0+x_1-y_0-y_1)^2\ }
          - \frac14\,(y_0-x_1-(1-s))_+^2.
\end{align*}
Analyzing the expression in the right-hand side we see that if \refe{y0large}
were false, then $\New<\Old$ along with \refe{x>y} would give
  $$ x_0+x_1-y_0-y_1 < x_0-y_1-(1-s), $$
which is \refe{y0large} in disguise. This contradiction shows that
\refe{y0large} is true. We now readily get \refe{x0large} as a consequence of
\refe{y0large} and \refe{x>y}, and \refe{312} is just a sum of \refe{y0large}
and \refe{x0large}.

To summarize, there are two major cases to consider: that where
\refe{balancingOk} holds true, and that where \refe{x0large}--\refe{312} hold
true. Since in the second case we have $G(x_0,y_1,1-s)=y_1(1-s)$ and
$G(x_1,y_0,1-s)=x_1(1-s)$, the proof of Lemma \refl{inequality} will be
complete once we establish the following claims.

\begin{claim}\label{m:claim1}
We have $f(x_0,x_1,x_1,x_0)\le 0.15$ for any $x_0,x_1\ge 0$ with
$s:=2(x_0+x_1)\in[1/2,1]$.
\end{claim}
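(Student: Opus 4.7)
The plan is to exploit the internal symmetry $y_0 = x_1$, $y_1 = x_0$ to collapse $f(x_0,x_1,x_1,x_0)$ to a function of two parameters. Setting $u := x_0$ and $v := x_1$, one has $u + v = s/2$, and the identity $f(x_0,x_1,x_1,x_0) = f(x_1,x_0,x_0,x_1)$ lets me assume $u \ge v \ge 0$. A key preliminary observation is that
\[
  2uv \le \tfrac12(u+v)^2 = s^2/8 < 0.15\,s^2,
\]
so the minimum in~\refe{f-def} is always realized by $x_0y_0 + x_1y_1 = 2uv$, giving
\[
  f(x_0,x_1,x_1,x_0) = 2uv + G(u,u,1-s) + G(v,v,1-s) + \tfrac14(1-s)^2.
\]
The task then reduces to a two-parameter inequality in $(u,v)$, or equivalently in $(s,v)$.

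Next I would split the problem according to the position of $1-s$ relative to $2v$ and $2u$. In the balanced case $1-s \le 2v \le 2u$, Lemma~\refl{x=y} applies to both $G$-terms and combines with the AM-GM bound $2uv \le s^2/8$ to yield
\[
  f \le -\tfrac58\,s^2 + s - \tfrac14,
\]
whose maximum on $s \in [1/2,1]$ is exactly $0.15$, attained at $s = 4/5$ (in agreement with Example~\refx{main}). In the intermediate case $2v < 1-s \le 2u$, Lemma~\refl{x=y} still bounds $G(u,u,1-s)$, while $G(v,v,1-s) = v^2$ directly from the definition of $G$. Substituting $u = s/2 - v$ produces a concave quadratic in $v$; elementary optimization over the admissible $(s,v)$-region keeps the bound strictly below $5/36$. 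Finally, in the small case $2u < 1-s$, both $G$-terms equal the corresponding squares, so that $f \le \tfrac14\bigl(s^2+(1-s)^2\bigr)$; the case hypothesis forces $s < 2/3$, on which the right-hand side remains at most $5/36$.

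The main obstacle is the balanced case: it is the tight one, and the constant $0.15$ only emerges when the AM-GM bound on $2uv$ and the full strength of Lemma~\refl{x=y} on each $G$-term are exploited simultaneously, with no slack left to spare. The other two cases are comfortably off the extremum and require only routine quadratic maximization on a compact region; the bookkeeping there is straightforward.
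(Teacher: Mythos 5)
Your argument follows the paper's proof essentially verbatim: collapse the minimum to $2x_0x_1$ via $2uv\le s^2/8<0.15s^2$, then split into three cases according to the position of $1-s$ relative to $2v$ and $2u$, applying Lemma\refl{x=y} wherever it is available; the tight balanced case is handled exactly as in the paper, with the extremum at $s=4/5$. One numerical assertion is off, however: in the intermediate case $2v<1-s\le 2u$ the supremum of the resulting quadratic is not below $5/36$ but equals $1/7\approx 0.1429$, approached as $s\to 5/7$ with $v\to(1-s)/2$ (equivalently $u\to s-1/2$); since $1/7<0.15$ the conclusion is unaffected, but the ``elementary optimization'' you defer to would not confirm the bound you state, so that step needs to be carried out and the constant corrected.
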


\begin{claim}\label{m:claim2}
For real $x_0,x_1,y_0$, and $y_1$, write $s:=x_0+x_1+y_0+y_1$ and let
  $$ g(x_0,x_1,y_0,y_1) =
               \min\{0.15s^2,x_0y_0+x_1y_1\}+(x_1+y_1)(1-s)+0.25(1-s)^2. $$
Then $g(x_0,x_1,y_0,y_1)\le 0.15$ whenever $x_0,x_1,y_0,y_1\ge 0$ satisfy
\refe{312}, and $s\le 1$.
\end{claim}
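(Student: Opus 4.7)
The plan is to first collapse the four-variable claim to a two-dimensional problem. Writing $p := x_0 + y_0$ and $q := x_1 + y_1$ (so $s = p + q$), the function $g$ depends on the splits of $p$ and $q$ only through $x_0y_0 + x_1y_1$, which sits inside the nondecreasing clip $\min\{0.15 s^2,\cdot\}$. AM--GM gives $x_0 y_0 + x_1 y_1 \le (p^2 + q^2)/4$, so it suffices to prove
\[
   \min\{h_1(p, q),\ h_2(p, q)\} \le 0.15
\]
on the triangle $\mathcal R := \{(p, q) : p, q \ge 0,\ p + q \le 1,\ 3p + q \ge 2\}$, where
\[
   h_1 := 0.15 s^2 + q(1-s) + \tfrac14(1-s)^2, \qquad h_2 := \tfrac14(p^2 + q^2) + q(1-s) + \tfrac14(1-s)^2.
\]

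I would then split the analysis according to whether $h_1 \le 0.15$. A short computation shows $h_1 \le 0.15 \Leftrightarrow s = 1$ or $p \ge (6q+1)/4$, and in this subregion there is nothing more to do. In the complementary part of $\mathcal R$---where $s < 1$ and $p < (6q+1)/4$, together with $p \ge (2-q)/3$ inherited from \refe{312}---it remains to show $h_2 \le 0.15$. The algebraic heart of the matter is the clean identity
\[
    h_2(p, q) - 0.15 = \tfrac12 \bigl(p - \tfrac{q+1}{2}\bigr)^2 - \tfrac58 \bigl(q - \tfrac15\bigr)^2,
\]
which converts $h_2 \le 0.15$ into the strip condition $|\,p - (q+1)/2\,| \le (\sqrt 5/2)\,|q - 1/5|$, with endpoints $p_{\pm} := (q+1)/2 \pm (\sqrt 5/2)(q - 1/5)$.

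Since the complementary subregion forces $q > 5/22 > 1/5$, the remaining verification is that the feasible $p$-interval $\bigl[(2-q)/3,\ \min\{(6q+1)/4,\ 1-q\}\bigr)$ lies inside $[p_-, p_+]$. This amounts to two one-sided inequalities, each of which reduces to a single linear condition on $q$: $q \ge 1/5$ for the left endpoint, and either $q \ge -\sqrt 5/10$ on $(5/22, 3/10)$ or $q \ge (5-\sqrt 5)/10$ on $[3/10, 1/2)$ for the right (the case split coming from $\min\{(6q+1)/4, 1-q\}$ at $q = 3/10$). The fortunate ordering $1/5 < 5/22 < (5-\sqrt 5)/10 < 3/10$ makes all three conditions hold on their respective ranges. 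The main obstacle is really just bookkeeping: several linear boundaries and one quadratic level set must be kept aligned across the triangle, and some of the thresholds are close enough that one has to be careful with the algebra.
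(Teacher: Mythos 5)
Your proposal is correct, and I checked the key computations: the factorization $h_1-0.15=(1-s)\bigl(0.6q-0.4p+0.1\bigr)$ does give the characterization $h_1\le 0.15\iff s=1$ or $p\ge(6q+1)/4$; the identity $h_2-0.15=\tfrac12\bigl(p-\tfrac{q+1}2\bigr)^2-\tfrac58\bigl(q-\tfrac15\bigr)^2$ is exact; the left-endpoint comparison $\tfrac{2-q}3\ge p_-$ does collapse to $q\ge 1/5$ after cancelling the negative factor $5-3\sqrt5$; and the two right-endpoint thresholds come out to $-\sqrt5/10$ and $(5-\sqrt5)/10$ with the ordering $1/5<5/22<(5-\sqrt5)/10<3/10$ as you state. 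Your first step --- bounding $x_0y_0+x_1y_1$ by $(p^2+q^2)/4$ and passing to the two variables $p=x_0+y_0$, $q=x_1+y_1$ --- is exactly the paper's symmetrization ($y_0=x_0$, $y_1=x_1$), just written additively. The second step, however, is genuinely different. The paper partitions the triangle by the line $x_1=\phi x_0$ with $\phi=(3-\sqrt5)/2$, on which the two branches of the $\min$ coincide, then shows each quadratic has no interior critical point in its region and checks all four boundary segments. You instead locate exactly where the first branch already closes the claim (a linear condition in $(p,q)$, since $h_1-0.15$ factors), and on the complement convert $h_2\le 0.15$ into membership in a strip via a difference of squares, reducing everything to comparing linear functions of $q$. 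Your route avoids the critical-point analysis and the $\phi$-partition entirely and is arguably more transparent; its cost is the need to discover the two exact factorizations and to track the near-coincident thresholds carefully, which you have done.
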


\begin{proof}[Proof of Claim \refm{claim1}]
As
\begin{multline*}
  f(x_0,x_1,x_1,x_0) = \min\{0.15s^2, 2x_0x_1\} \\
                             + G(x_0,x_0,1-s) + G(x_1,x_1,1-s) + 0.25(1-s)^2,
\end{multline*}
and since $x_0+x_1=\frac12s$ implies $2x_0x_1\le\frac18\,s^2<0.15s^2$, we
have to show that
\begin{equation}\label{e:rom1}
  2x_0x_1 + G(x_0,x_0,1-s) + G(x_1,x_1,1-s) + 0.25(1-s)^2 \le 0.15.
\end{equation}

We distinguish three cases.

\medskip
\paragraph{Case I: $\max\{x_0,x_1\}\le\frac12(1-s)$}

In this case, from the definition of the function $G$, we have
$G(x_0,x_0,1-s)=x_0^2$ and $G(x_1,x_1,1-s)=x_1^2$. Therefore, \refe{rom1}
reduces to
  $$ 2x_0x_1+x_0^2+x_1^2+0.25(1-s)^2 \le 0.15 $$
or, equivalently,
\begin{equation}\label{e:rom2}
  0.25s^2 + 0.25(1-s)^2 \le 0.15.
\end{equation}
To show this we notice that our present assumption
$\max\{x_0,x_1\}\le\frac12(1-s)$ yields $s=2(x_0+x_1)\le 2-2s$, implying
$s\le\frac23$. However, the largest value attained by the left-hand side of
\refe{rom2} in the range $\frac12\le s\le\frac23$ is easily seen to be
$5/36<0.15$.

\medskip
\paragraph{Case II: $\min\{x_0,x_1\}\ge\frac12(1-s)$}

In this case, by Lemma \ref{l:x=y}, we have $G(x_0,x_0,1-s)\le
x_0(1-s)-0.25(1-s)^2$ and $G(x_1,x_1,1-s)\le x_1(1-s)-0.25(1-s)^2$.
Consequently, the left-hand side of \refe{rom1} is at most
\begin{align*}
  2x_0x_1 &+ x_0(1-s)+x_1(1-s)-0.25(1-s)^2 \\
    &\le \frac12\,(x_0+x_1)^2 + (x_0+x_1)(1-s)-0.25(1-s)^2 \\
    &= \frac18\,s^2+\frac12s(1-s)-0.25(1-s)^2 \\
    &= -\frac58\,\left(s-\frac45\right)^2+0.15 \\
    &\le 0.15.
\end{align*}

\medskip
\paragraph{Case III: $x_0\le\frac12(1-s)\le x_1$
  (the case $x_1 \leq \frac{1-s}{2} \le x_0$ being symmetric)}

In this case $G(x_0,x_0,1-s)=x_0^2$, while from Lemma \ref{l:x=y} we have
$G(x_1,x_1,1-s)\le x_1(1-s)-0.25(1-s)^2$; thus, \refe{rom1} reduces to
  $$ 2x_0x_1+x_0^2+x_1(1-s) \le 0.15 $$
and, substituting $x_0=\frac12\,s-x_1$ and re-arranging the terms, to
\begin{equation}\label{e:rom4}
  \frac14\,(2s^2-2s+1) - \left(x_1-\frac12\,(1-s)\right)^2 \le 0.15.
\end{equation}

Observing that $2s^2-2s+1$ is increasing for $s\ge 1/2$ (and recalling that
$s\ge\frac12$ by the assumptions of the claim), we conclude that if
$s\le\frac23$, then the left-hand side or \refe{rom4} does not exceed
  $$ \frac14\,\left(2\cdot\frac49-2\cdot\frac23+1\right)
                                                = \frac5{36} < 0.15. $$

If, on the other hand, $s\ge\frac23$, then we have
  $$ x_1 = \frac12\,s-x_0 \ge \frac12\,s-\frac12\,(1-s)
                                     = s-\frac12 \ge \frac12\,(1-s), $$
whence the left-hand side of \refe{rom4} does not exceed
  $$ \frac14\,(2s^2-2s+1)
            - \left(\left(s-\frac12\right)-\frac12\,(1-s)\right)^2
  = -\frac74\,\left(s-\frac57\right)^2+\frac17 < 0.15. $$
\end{proof}

\begin{proof}[Proof of Claim \refm{claim2}]
Since replacing $x_0$ and $y_0$ with their average $(x_0+y_0)/2$ and,
simultaneously, $x_1$ and $y_1$ with their average $(x_1+y_1)/2$, can only
increase the value of $g$, and does not affect the validity of \refe{312}, we
can assume that $y_0=x_0$ and $y_1=x_1$. Thus, we want to show that in the
region defined by
\begin{equation}\label{e:triangle}
  x_0,x_1\ge 0,\ x_0+x_1\le 1/2,\ \text{and}\ 3x_0+x_1\ge 1,
\end{equation}
we have
  $$ g(x_0, x_1,x_0,x_1) \le 0.15. $$
Observing that
\begin{align*}
  g(x_0, x_1,x_0,x_1)
    &= \min\{0.6(x_0+x_1)^2,x_0^2+x_1^2\}
                                   + 0.25(1-2x_0-2x_1)(1-2x_0+6x_1) \\
    &=  \min\{0.6(x_0+x_1)^2,x_0^2+x_1^2\}
                                       + x_0^2-2x_0x_1-3x_1^2-x_0+x_1+0.25,
\end{align*}
the estimate to prove can be re-written as
\begin{equation*}\label{e:gx0x1}
  \min\{u(x_0,x_1),v(x_0,x_1)\} \le -0.1,
\end{equation*}
where
\begin{align*}
  u(x_0,x_1) &= 2x_0^2-2x_0x_1-2x_1^2-x_0+x_1
\intertext{and}
  v(x_0,x_1) &= 1.6x_0^2-0.8x_0x_1-2.4x_1^2-x_0+x_1.
\end{align*}

Conditions \refe{triangle} determine on the coordinate plane $(x_0,x_1)$ a
triangle with the vertices at $(1/3,0),\ (1/2,0)$, and $(1/4,1/4)$. If
$\phi:=(3-\sqrt5)/2$, then the line $x_1=\phi x_0$ splits this triangle into
two parts: a smaller triangle $\cT$ which inherits the vertex $(1/4,1/4)$ of
the original triangle, and a rectangle $\cR$ inheriting the vertices
$(1/3,0)$ and $(1/2,0)$ of the original triangle. (We consider both $\cT$ and
$\cR$ as closed regions, so that they intersect by a segment.) The reason to
partition the large rectangle as indicated is that
  $$  \min\{u(x_0,x_1),v(x_0,x_1)\} =
        \begin{cases}
          u(x_0,x_1)\ &\text{if}\ (x_0,x_1)\in\cT, \\
          v(x_0,x_1)\ &\text{if}\ (x_0,x_1)\in\cR,
        \end{cases} $$
as one can easily verify; we therefore have to prove that $u(x_0,x_1)\le-0.1$
for all $(x_0,x_1)\in\cT$, and $v(x_0,x_1)\le -0.1$ for all
$(x_0,x_1)\in\cR$.

To this end we observe that, as a simple computation shows, the only critical
point of $u$ is $(0.3,0.1)$, and the only critical point of $v$ is
$(0.35,0.15)$. Since the former point lies on the line $3x_0+x_1=1$, and the
latter on the line $x_0+x_1=1/2$, these points do not belong to the interiors
of $\cT$ and $\cR$. Hence, the maxima of $u$ on $\cT$, and of $v$ on $\cR$,
are attained on the boundary of these regions. To complete the proof we now
observe that

\medskip
\paragraph{I} if $1/3\le x_0\le 1/2$ and $x_1=0$, then
  $$ v(x_0,x_1) = 1.6x_0^2-x_0
              \le 1.6\cdot\frac14 - \frac12 = -0.1 $$ (as $1.6x_0^2-x_0$
is an increasing function of $x_0$ on the interval $[1/3,1/2]$);

\medskip
\paragraph{II} if $x_0+x_1=1/2$, then
  $$ u(x_0,x_1)=x_0^2+x_1^2-0.25\ge 0, $$
and
  $$v(x_0,x_1)=0.6(x_0+x_1)^2-0.25=-0.1. $$

\medskip
\paragraph{III} if $3x_0+x_1=1$, then
  $$ u(x_0,x_1) = -10x_0^2+6x_0-1 = -10(x_0-0.3)^2-0.1 \le -0.1; $$
if, in addition, $(x_0,x_1)\in\cR$, then
  $$ 1 = 3x_0+x_1 \le (3+\phi)x_0, $$
whence $x_0\ge 1/(3+\phi)=(9+\sqrt 5)/38$ and therefore
\begin{align*}
  v(x_0,x_1)
    &= -17.6x_0^2+9.6x_0-1.4 \\
    &\le -17.6\cdot\left(\frac{9+\sqrt5}{38}\right)^2
       + 9.6\cdot\frac{9+\sqrt5}{38} - 1.4 \\
    &= -0.1001\ldots
\end{align*}
(as $(9+\sqrt 5)/38>3/11$, and $-17.6x_0^2+9.6x_0-1.4$ is a decreasing function
of $x_0$ for $x_0\ge 3/11$);

\medskip
\paragraph{IV} if $x_1=\phi x_0$ and $(x_0,x_1)\in\cT\cap\cR$, then
\begin{align*}
  u(x_0,x_1)=v(x_0,x_1)
    &=(2-2\phi-2\phi^2)x_0^2+(\phi-1)x_0 \\
    &= 4(\sqrt5-2)x_0^2-\frac{\sqrt5-1}2\,x_0,
\end{align*}
being a convex function of $x_0$, attains its maximum for a value of $x_0$
which is on the boundary of the triangle $\cT\cup\cR$. However, we have
already seen that $u$ and $v$ do not exceed the value of $-0.1$ on the part
of the boundary they are responsible for.
\end{proof}

This finally completes the proof of Lemma \refl{inequality}, and thus the
whole proof of Theorem \reft{main}.
\end{proof}

\section{Proof of Theorem \reft{principal}}\label{s:principal-proof}

As in the proof of Theorem \reft{main}, we use induction on $|A|$ and, with
Lemma \refl{int-red} in mind, assume that $A$ is a set of integers. Again,
for $i,j\in\{0,1\}$ we let $A_i:=\{a\in A\colon a\equiv i\pmod 2\}$ and
$A_{ij}:=\{a\in A\colon a\equiv i+2j\pmod 4\}$, and write $m:=|A|,\
m_i:=|A_i|$, and $m_{ij}:=|A_{ij}|$. Dividing through all elements of $A$ by
their greatest common divisor and replacing $A$ with $-A$, if necessary, we
can assume that
\begin{equation}\label{e:ass}
  0\le m_0<m \ \text{and}\ m_{00}\le m_{01}.
\end{equation}
We want to show that $T(A,-A,A)\le 0.5m^2+0.5m$.

We distinguish two major cases, depending on which of $m_0$ and $m_1$ is
larger.

\medskip
\paragraph{Case I: $m_0\ge m_1$}
Since $a-b=2c$ implies that $a$ and $b$ are of the same parity, we have the
decomposition
\begin{align*}
  T(A,-A,A)
    &= T(A_1,-A_1,A) + T(A_0,-A_0,A_1) + T(A_0,-A_0,A_0) \notag \\
    &= T(A_1,-A_1,A) + T(A_{00},-A_{01},A_1) + T(A_{01},-A_{00},A_1) \notag \\
    &{\hskip 3in} + T(A_0,-A_0,A_0)
\end{align*}
(for the second equality notice that $a_0-b_0=2c_1$ with $a_0,b_0\in A_0$ and
$c_1\in A_1$ implies that either $a_0\in A_{00},\ b_0\in A_{01}$, or $a_0\in
A_{01},\ b_0\in A_{00}$). We estimate the first summand in the right-hand
side trivially, and use the induction hypothesis (cf.~\refe{ass}) for the
last summand, and Corollary \refc{recasting} for the remaining two summands;
this gives
\begin{equation}\label{e:decomp}
  T(A,-A,A) \le m_1^2 + 2G(m_{00},m_{01},m_1)
                                   + \frac12 + \frac12\,m_0^2 + \frac12\,m_0.
\end{equation}

Keeping in mind \refe{ass}, we now consider three further subcases.

\medskip
\paragraph{Subcase I.a: $\max\{m_{00},m_{01},m_1\}=m_1$}
Using \refe{decomp} and recalling that, by the assumption of Case I, we have
$m_1\le m_0=m_{00}+m_{01}$, we get
\begin{align*}
  T(A,-A,A)
    &\le m_1^2 + 2m_{00}m_{01} - \frac12\,(m_{00}+m_{01}-m_1)^2 + \frac12
                                          + \frac12\,m_0^2 + \frac12\,m_0 \\
    &=   \frac12\,m_1^2 + 2m_{00}m_{01} + m_0m_1 + \frac12\,m_0 + \frac12 \\
    &\le \frac12\,m_1^2 + \frac12\,m_{00}^2+\frac12\,m_{01}^2
                                     + m_{00}m_{01} + m_0m_1 + \frac12\,m \\
    &=   \frac12\,m^2 + \frac12\,m.
\end{align*}

\medskip
\paragraph{Subcase I.b: $\max\{m_{00},m_{01},m_1\}=m_{01}\le m_{00}+m_1$}
By \refe{decomp}, using the estimate $\frac12\,m_0^2\le m_{00}^2+m_{01}^2$,
we obtain
\begin{align*}
  T(A,-A,A)
    &\le m_1^2 + 2m_{00}m_1 - \frac12\,(m_{00}+m_1-m_{01})^2 + \frac12
                                          + \frac12\,m_0^2 + \frac12\,m_0 \\
    &\le \frac12\,m_1^2 + m_{00}m_1 + \frac12\,m_{00}^2 + \frac12\,m_{01}^2
            + m_{00}m_{01} + m_1m_{01} + \frac12+\frac12\,m_0 \\
    &\le \frac12\,m^2 + \frac12 + \frac12\,m_0 \\
    &\le \frac12\,m^2 + \frac12\,m.
\end{align*}

\medskip
\paragraph{Subcase I.c: $\max\{m_{00},m_{01},m_1\}=m_{01}\ge m_{00}+m_1$}
In this case we have $G(m_{00},m_{01},m_1)\le m_{00}m_1$, and \refe{decomp}
along with $\frac12\,m_1<m_1\le m_{01}-m_{00}$ give
\begin{align*}
  T(A,-A,A)
    &\le m_1^2 + 2m_{00}m_1 + \frac12 + \frac12\,m_0^2 + \frac12\,m_0 \\
    &= \frac12\,(m_1+m_0)^2 + \frac12\,m_1^2 - m_0m_1+2m_{00}m_1
                                             + \frac12 + \frac12\,m_0 \\
    &= \frac12\,m^2 + m_1\left(\frac12\,m_1+m_{00}-m_{01}\right)
                                             + \frac12 + \frac12\,m_0 \\
    &< \frac12\,m^2 + \frac12\,m.
\end{align*}

\medskip
\paragraph{Case II: $m_1\ge m_0$}
In this case we use the decomposition
\begin{align*}
  T(A,-A,A)
    &=   T(A_0,-A_0,A) + T(A_1,-A_1,A_0) + T(A_1,-A_1,A_1) \\
    &=   T(A_0,-A_0,A) + T(A_{10},-A_{10},A_0) + T(A_{11},-A_{11},A_0) \\
    &{\hskip 3in} + T(A_1,-A_1,A_1). \\
\end{align*}

Using the trivial bound $m_0^2$ for the first summand, applying Corollary
\refc{recasting} to estimate the second and third summands, and observing
that $a_1-b_1=2c_1\ (a_1,b_1,c_1\in A_1)$ implies that exactly one of $a_1$
and $a_2$ is in $A_{10}$ and another is in $A_{11}$, we get
\begin{equation}\label{e:CaseII}
  T(A,-A,A) \le m_0^2 + G(m_{10},m_{10},m_0) + G(m_{11},m_{11},m_0)
                                                   + \frac12 + 2m_{10}m_{11}.
\end{equation}
Since the right-hand side is symmetric in $m_{10}$ and $m_{11}$, without loss
of generality we assume that $m_{10}\le m_{11}$. Consequently, by the
assumption of Case II, we have $m_0\le m_1\le 2m_{11}$, and to complete the
proof we consider two subcases, according to whether the stronger estimate
$m_0\le 2m_{10}$ holds.

\medskip
\paragraph{Subcase II.a: $m_0\le 2m_{10}$}
In this case, by \refe{CaseII} and Lemma \refl{x=y}, and in view of
$2m_{10}m_{11}\le\frac12(m_{10}+m_{11})^2=\frac12\,m_1^2$, we have
\begin{align*}
  T(A,-A,A)
    &\le m_0^2 + \left( m_{10}m_0 - \frac14\,m_0^2 \right)
               + \left( m_{11}m_0 -\frac14\,m_0^2 \right)
                                 + \frac12 + \frac12\,m_1^2 \\
    &=   \frac12\,m_0^2 + m_0m_1 + \frac12 m_1^2 + \frac 12 \\
    &=   \frac12\,m^2 + \frac12.
\end{align*}

\medskip
\paragraph{Subcase II.b: $2m_{10}\le m_0\le 2m_{11}$}
Acting as in the previous subcase, but using the trivial estimate for the
second summand in \refe{CaseII}, we get
\begin{align*}
  T(A,-A,A)
    &\le m_0^2 + m_{10}^2 + \left( m_{11}m_0 - \frac14\,m_0^2 \right)
                                 + \frac12 + 2m_{10}m_{11} \\
    &\le \frac34\,m_0^2 + m_{10}^2 + m_{11}m_0
                 + \frac32\,m_{10}m_{11} + \frac14\,m_{10}^2
                                                + \frac14\,m_{11}^2 + \frac12 \\
    &=  \frac12(m_0+m_{10}+m_{11})^2
                 - \frac14\,(m_{01}+m_{11}-m_0)(m_0+m_{11}-3m_{10}) + \frac12 \\
    &\le \frac12\,m^2 + \frac12,
\end{align*}
the last inequality following from $m_{01}+m_{11}-m_0\ge 0$ and
$m_0+m_{11}-3m_{10}\ge 0$, by the present subcase assumptions.


This completes the proof of Theorem \reft{principal}.



\begin{thebibliography}{HLP88}
\bibitem[G32]{b:g}
  {\sc R.M.~Gabriel},
  The rearrangement of positive Fourier coefficients,
  \emph{Proc. London Math. Soc. (Second Series)} {\bf 33} (1932), 32--51.
\bibitem[HL28]{b:hl}
  {\sc G.H.~Hardy} and {\sc J.E.~Littlewood},
  Notes on the theory of series (VIII): an inequality,
  \emph{J. London Math. Soc.} {\bf 3} (1928), 105--110.
\bibitem[HLP88]{b:hlp}
  {\sc G.H.~Hardy, J.E.~Littlewood}, and {\sc G.~Polya},
  Inequalities, 2d ed., Camb. Univer. Press, 1988.
\bibitem[L98]{b:l}
  {\sc V.~Lev},
  On the number of solutions of a linear equation over finite sets of integers,
  \emph{Journal of Combinatorial Theory, Series A} {\bf 83} (2) (1998), 251--267.
\bibitem[NPPZ]{b:nppz}
  {\sc G.~Nivasch, J.~Pach, R.~Pinchasi}, and {\sc S. Zerbib},
  The number of distinct distances from a vertex of a convex polygon,
  \emph{Submitted}.
\end{thebibliography}
\end{document}